\theoremstyle{plain}
\newtheorem{proposition}{Proposition}
\newtheorem{corollary}{Corollary}[proposition]
\newtheorem{remark}{Remark}[section]
\title[Feynman integral in $\mathbb R^1\oplus\mathbb R^m$ ...]
{Feynman integral in $\bm{\mathbb R^1\oplus\mathbb R^m}$ and complex expansion of $\bm{{}_2F_1}$}
\author{Mykola A. Shpot}
\address{(M. A. Shpot) Institute for Condensed Matter Physics, 79011 Lviv, Ukraine}
\email{shpot.mykola@gmail.com}
\author{Tibor K. Pog\'any}
\address{(T. K. Pog\'any) Faculty of Maritime Studies, University of Rijeka, 51000 Rijeka, Croatia \and
Institute of Applied Mathematics, \'Obuda University, 1034 Budapest, Hungary}
\email{poganj@pfri.hr}
\newcommand{\e}{{\rm e}}
\newcommand{\R}{{\rm Re}\,}
\newcommand{\E}{\epsilon}
\newcommand {\lr}[1]{\left({#1}\right)}
\newcommand {\lrB}[1]{\Big({#1}\Big)}
\newcommand {\lrs}[1]{\left[{#1}\right]}
\newcommand {\lrS}[1]{\Big[{#1}\Big]}
\def\d#1{{#1\kern-0.4em\char"16\kern-0.1em}}
\def\D#1{{\raise0.2ex\hbox{-}\kern-0.4em #1}}
\dedicatory{\it To Professor Hari Mohan Srivastava on the occasion of his 75th birthday anniversary}
\begin{document}
\maketitle

\begin{abstract}
Closed form expressions are proposed for the Feynman integral
   \begin{equation} \label{Ipq}
      I_{D, m}(p,q) = \int\frac{{\rm d}^m y}{(2\pi)^m}\int\frac{{\rm d}^D x}{(2\pi)^D}
                         \frac1{(\bm x-\bm p /2)^2+(\bm y-\bm q /2)^4}\,
                         \frac1{(\bm x+\bm p /2)^2+(\bm y+\bm q /2)^4}
    \end{equation}
over $d=D+m$ dimensional space with $(\bm x,\bm y),\, (\bm p,\bm q) \in \mathbb R^D \oplus \mathbb R^m$, in the special
case $D=1$. We show that $I_{1,m}(p,q)$ can be expressed in different forms involving real and imaginary parts of the complex variable
Gauss hypergeometric function $_2F_1$, as well as generalized hypergeometric $_2F_2$ and $_3F_2$, Horn $H_4$ and  Appell $F_2$
functions. Several interesting relations are derived between the real and imaginary parts of $_2F_1$ and the function $H_4$.

\vspace{3mm}
\noindent {\bf Keywords:} Feynman integral; Gauss hypergeometric function; Generalized hypergeometric function;
Appell functions; Horn functions; Hypergeometric transformation formulae

\vspace{1mm}
\noindent {\bf AMS Subject Classification} (2010){\bf :}  33C05, 33C20, 33C65, 40C10, 40H05, 81Q30.
\end{abstract}

\allowdisplaybreaks

\section{Introduction}\label{Intro}

We are interested in calculation of the integral \eqref{Ipq} occurring in the field theoretical treatment of $m$ axial Lifshitz points
in strongly anisotropic $d$ dimensional systems (see e.g. \cite{SPD05,RDS11,SP12} and references therein).

The integral \eqref{Ipq} is over a $d$ dimensional space split into two complementary Euclidean subspaces $\mathbb R^D$ and
$\mathbb R^m$. The co-dimensions $D$ and $m$ are related via $D=d-m$. The integral \eqref{Ipq} converges inside of a sector of the
$(m,d)$ plane constrained by the border lines $d^*(m)=4+m/2$, $d_*(m)=2+m/2$, $m=0$, and $d=m$, see \cite[Fig. 1]{SPD05}. The function
$I_{1,m}(p,q)$ can be extended outside the convergence region of the integral \eqref{Ipq} by means of analytical continuations.

The deviations from the "critical" lines $d^*(m)$ and $d_*(m)$ are defined as $\varepsilon = 4+m/2-d$ and $\epsilon = d-2-m/2$;
hence $\varepsilon = 2-\epsilon$. When $d=d^*(m)$ the integral \eqref{Ipq} diverges at large $\bm x\in\mathbb R^D$ and
$\bm y\in\mathbb R^m$, while at $d=d_*(m)$ it diverges for small values of $\bm x$ and $\bm y$. Close to marginal dimensions
$d^*(m)$ and $d_*(m)$, these divergences manifest themselves as simple poles $\propto 1/\varepsilon$ and $\propto 1/\E$, respectively.

Due to the full rotational symmetry in each of the subspaces $\mathbb R^D$ and $\mathbb R^m$,
the integral $I_{D, m}(p,q) \equiv I(p,q)$ depends on the absolute values $p$, $q$ of the
vectors $\bm p\in\mathbb R^D$, $\bm q\in\mathbb R^m$. Moreover, it is a generalized homogeneous function. This implies
that a power of each argument, $p^{-\varepsilon} = p^{-2+\E}$ or $q^{-2\varepsilon}=q^{-4+2\E}$, can be scaled out and the
function $I(p,q)$ can be represented in two equivalent scaling forms,
   \begin{equation}\label{SCI}
     I(p,q) = p^{-2+\E}\,I\Big(1,\frac q{\sqrt p}\Big) \quad {\rm or} \quad
     I(p,q) = q^{-4+2\E}\,I\Big(\frac p{q^2},1\Big)\,.
   \end{equation}
The special cases of $I(p,q)$ at zero arguments are homogeneous in the sense that
   \begin{equation} \label{S00}
      I(p,0) = p^{-2+\E}\,I(1,0) \quad {\rm and} \quad I(0,q) = q^{-4+2\E}\,I(0,1).
   \end{equation}
Thus the constants $I(1,0)$ and $I(0,1)$ give the corresponding asymptotic behaviors of $I(p,q)$ at
$p\to\infty$ and $q\to\infty$ when the remaining second variable is finite.

For several pairs of $m$ and $D$, which represent certain separate \emph{points} in the $(m,d)$ plane,
$I_{D, m}(p,q)$ has been calculated explicitly \cite{SPD05}:
   \begin{align}\nonumber 
      I_{2,1}(1,q) &= \frac{\sqrt[4]{w}}{4\sqrt 2}\,_2F_1\Big(\frac{1}{4},\frac{1}{4};1;w\Big)\,,
			                \quad w = \frac4{(1+q^4)^2(4+q^4)}
                   &&(\varepsilon=3/2, \E=1/2), \\\nonumber
      I_{3,1}(1,q) &= {1\over 8\pi\sqrt 2} \sqrt{\sqrt{4+q^4}-q^2} = {q^{-1}\over 4\pi\sqrt 2}{1\over\sqrt{\sqrt{1+4/q^4}+1}}
                   &&(\varepsilon=1/2, \E=3/2), \\\label{I41}
      I_{1,4}(1,q) &= \frac1{32\pi^2}\,\bigg[ \arctan\frac{2}{q^2(3+q^4)}+\frac1{q^2}\,\ln{1+q^4\over 1+ q^4/4}\bigg]
                   &&(\varepsilon=\E=1)\, .
   \end{align}
The explicit dependence of $I_{D, m}(p,q)$ on $p$ can be reconstructed here by using the homogeneity property \eqref{SCI}.

The explicit expressions for $I(1,0)$ and $I(0,1)$ for arbitrary $D$ and $m$ have been derived in \cite[Appendices B.1 and B.2]{SPD05}.
In the special case $D=1$ they reduce to
   \begin{equation} \label{I10}
      I_{1,m}(1,0) = C_1\,\frac{2^{-2+\epsilon}}{1-\epsilon}\,\cos\frac{\pi\epsilon}{2}\,, \qquad
      I_{1,m}(0,1) = \frac{C_1}{2(1-\epsilon)}  \Big[ \epsilon-\frac{\sqrt\pi\,\epsilon\, \Gamma(\epsilon)}
                     {\Gamma(-\frac12+\epsilon)}\Big],
   \end{equation}
where $C_1 = C_1(\epsilon)=16^{-\epsilon}\pi^{-1-\epsilon}\Gamma(2-\epsilon)/\epsilon$.

In writing \eqref{I10} we use a convenient parametrization in terms of $\epsilon=m/2-1$ which follows from  $\epsilon=D+m/2-2$ at $D=1$.
Dividing the original integral \eqref{Ipq} by $C_1$ we define the reduced function $\hat I_{1,m}(p,q):= C_1^{-1}\,I_{1,m}(p,q)$.
The constant $C_1$ contains the both poles in $\epsilon$ and in $\varepsilon=2-\epsilon$. Thus, the function
$\hat I_{1,m}(p,q)$ is analytic in both of these variables.

In the sequel we shall calculate the integral \eqref{Ipq} along the \emph{line} in the $(m,d)$ plane given by $D=1$ and $m$ varying
in the range from $2$ to $6$. We shall encounter generalized hypergeometric functions ${}_rF_s$, as well as Horn $H_4$ and Appell  hypergeometric functions of two variables. Thus we need their power series definitions.

The generalized hypergeometric function ${}_rF_s(z)$ with $r$ numerator and $s$ deno\-minator parameters is defined by the series
   \begin{equation} \label{A4}
      {}_rF_s \Big( \begin{array}{c} a_1,\cdots,a_r \\ b_1, \cdots, b_s \end{array} \Big| \,z \Big)
            = \sum_{n \geq 0} \dfrac{ \prod\limits_{j=1}^r(a_j)_{n}}{\prod\limits_{k=1}^s(b_k)_{n}} \; \dfrac{z^n}{n!}\,,
   \end{equation}
where $a_j \in \mathbb C; j=\overline{1,r}$ and $b_k \in \mathbb C \setminus \mathbb Z_0^-; k= \overline{1,s}$. The $(\lambda)_{\mu}$
is  the Pochhammer symbol (or the {\it shifted factorial}) defined, in terms of Euler's Gamma function, by
   \[ (\lambda)_{\mu}:=\dfrac{\Gamma(\lambda+\mu)}{\Gamma(\lambda)}
                      =\begin{cases}
                           1 & \mu=0;\, \lambda \in \mathbb C \setminus \{0\}\\
                           \lambda(\lambda+1) \cdots (\lambda + n-1) & \mu=n \in \mathbb N;\; \lambda \in \mathbb C
                         \end{cases}\,,\]
and, by convention, $(0)_0=1$.

In the case $r=s+1$ the series \eqref{A4} converges for $|z| < 1$ and diverges for $|z| > 1$. On the unit circle $|z|=1$,
the sufficient condition for convergence of \eqref{A4} is $\Re\,\big(\sum_{j=1}^s b_j - \sum_{j=1}^r a_j\big)>0$.
Analytic continuation can be employed for larger $z$ values. When $r \leq s$ the series \eqref{A4} converges for all finite
values of $z$. 

The Appell's hypergeometric functions of two variables involved in the following are defined by double power series expansions
\cite{Appell26,Bailey,Slater,SriMan}
   \begin{align}\label{AF1}
      &F_1(a, b, b'; c; x, y) = \sum_{k\geq 0} \sum_{n\geq 0} \frac{(a)_{k+n} (b)_k (b')_n}{(c)_{k+n}}\,
                                \frac{x^k}{k!}\, \frac{y^n}{n!}, \qquad |x|<1,\,|y|<1, \\\label{AF2}
      &F_2(a,b,b';c,c';x,y)   = \sum_{k\geq 0} \sum_{n\geq 0} \frac{(a)_{k+n} (b)_k (b')_n}{(c)_k(c)_n}\,
                                \frac{x^k}{k!}\, \frac{y^n}{n!}, \qquad |x|+|y|<1, \\\label{AF4}
      &F_4(a,b;c,c';x,y)      = \sum_{k\geq 0} \sum_{n\geq 0} \frac{(a)_{k+n}(b)_{k+n}}{(c)_k(c')_n}\frac{x^k}{k!}\frac{y^n}{n!},
			                          \qquad \, \, \sqrt{|x|}+\sqrt{|y|}<1\,.			
   \end{align}
Finally, the Horn function $H_4$ (see e. g. \cite{SriMan, ErdT, SriKar} and their references) is defined by
   \begin{equation}\label{H4D}
      H_4(\alpha,\beta;\gamma,\delta;x,y) = \sum_{k \geq 0}\sum_{n\ge0}
                                            \frac{(\alpha)_{2k+n}(\beta)_n}{(\gamma)_k(\delta)_n}\,
                                            \frac{x^k}{k!}\,\frac{y^n}{n!}, \qquad 2\sqrt{|x|} + |y|<1.
   \end{equation}
Outside their convergence domains the functions \eqref{AF1}-\eqref{H4D} are defined by analy\-tical continuation.

\section{The inner integral}

In \eqref{Ipq}, the inner $D$ dimensional integral over $\bm x$ has the form
   \begin{equation} \label{IP01}
      \hspace{-1mm}  J_D(p;\kappa_1,\kappa_2){=}
      \int_{\bm x}^{(D)}\!\!\!\!\frac1{(\bm x{-}\bm p /2)^2{+}\kappa_1^2}\,\frac1{(\bm x{+}\bm p/2)^2{+}\kappa_2^2}=
      \int_{\bm x}^{(D)}\!\!\!\!\frac1{x^2{+}\kappa_1^2}\,\frac1{(\bm x{+}\bm p)^2{+}\kappa_2^2}.
   \end{equation}
This is the standard one-loop two-point Feynman integral with arbitrary "masses" $\kappa_1$ and $\kappa_2$ and external momentum
$\bm p$.\footnote{Here and throughout the paper we use an obvious short-hand notation
$(2\pi)^{-\Omega}\int{\rm d}^\Omega z \equiv \int_{\bm z}^{(\Omega)}$, $\Omega \in \{D, m\}$.}
As a function suitable for subsequent integration over $\bm y$ in \eqref{Ipq}, the integral \eqref{IP01} has been calculated in
\cite{Sh07}:
   \begin{equation} \label{Iff1}
      J_D(p;\kappa_1,\kappa_2)=\gamma_D\Big(\frac{\kappa_1+\kappa_2}{2}\Big)^{D-4}
      F_1\Big(2-\frac D2;1,\frac32-\frac D2;\frac32;
      \frac{-p^2}{(\kappa_1+\kappa_2)^2},\frac{(\kappa_2-\kappa_1)^2}{(\kappa_1+\kappa_2)^2}\Big)\,.
   \end{equation}
Here $F_1$ is the Appell function defined in \eqref{AF1},
   \[\gamma_D:=(4\pi)^{-D/2}\,\Gamma\lr{2-D/2},\qquad
             \kappa_1^2 = (\bm y-\bm q/2)^4, \quad {\rm and} \quad \kappa_2^2 = (\bm y+\bm q/2)^4. \]
Hence, for combinations appearing in the arguments of $F_1$ in \eqref{Iff1} we have
   \begin{equation} \label{AKA}
      \kappa_2-\kappa_1 = 2(\bm y\cdot\bm q) \qquad {\rm and} \qquad \kappa_1+\kappa_2=2(y^2+q^2/4)
   \end{equation}
while $\kappa_1\kappa_2=(y^2+q^2/4)^2-(\bm y\cdot\bm q)^2$.

For generic $\kappa_1$, $\kappa_2$ and integer $D$, \eqref{Iff1} gives the following integrands for the remaining integration over
$\bm y$ in \eqref{Ipq} \cite[Sec. V]{Sh07}, cf. \cite[Sec. IV]{DavDel98}. Apart from the trivial marginal cases
$J_0(p;\kappa_1,\kappa_2)=(\kappa_1\kappa_2)^{-2}$ and $J_{D\nearrow 4}(p;\kappa_1,\kappa_2)=1/[8\pi^2(4-D)]+ \mathscr O(1)$,
these are
   \begin{align}\label{Id1}
      & J_1(p;\kappa_1,\kappa_2) = \frac{\kappa_1+\kappa_2}{2\kappa_1\kappa_2}\,\frac{1}{p^2+(\kappa_1+\kappa_2)^2}, \\\label{Id2}
      & J_2(p;\kappa_1,\kappa_2) = \frac1{2\pi \, \sqrt\Delta}\ln\frac{p^2+\kappa_1^2
                                 + \kappa_2^2+\sqrt{\Delta}}{2\kappa_1\kappa_2}\,,\\ \nonumber
      & J_3(p;\kappa_1,\kappa_2) = \frac1{4\pi p} \arctan\frac{p}{\kappa_1+\kappa_2}.
   \end{align}
In \eqref{Id2},
   \begin{equation} \label{Delta}
      \Delta=\left((\kappa_1+\kappa_2)^2+p^2\right)\left((\kappa_2-\kappa_1)^2+p^2\right)
   \end{equation}
is the same as in \cite[Eq.(2.16)]{Tar08}; this is the counterpart of the triangle K\"allen function of \cite[Eq.(21)]{BDS96}.

In the following, along with \eqref{Id1}, we shall use the special case of \eqref{Iff1} with $\kappa_1=0$ and $\kappa_2=\kappa$,
   \begin{equation} \label{IF0}
      J_D(p;0,\kappa) = -\gamma_D\kappa^{D-4}\, _2F_1\Big(2{-}\frac{D}{2},1;\frac{D}{2};-\frac{p^2}{\kappa^2}\Big).
   \end{equation}
The $_2F_1$ function here is ubiquitous in the literature on Feynman integrals since, presumably, \cite[Eq.(16)]{BG72}
and appears practically in all references to be cited in the next section.

\subsection{Formula \eqref{Iff1}, related references, and a quadratic transformation of $F_1$}

The explicit formula \eqref{Iff1} does not follow from the result \cite[Eq.(20)]{BD91} for the more general integral
$J_D(p;\alpha,\beta;m_1,m_2)$ where $\alpha$, $\beta\ge1$ denote the (higher) powers of denominators appearing in \eqref{IP01}.
In the special case $\alpha=\beta=1$ considered here, the linear combination of the Appell functions $F_4$ in \cite[Eq.(20)]{BD91}
directly reduces to a linear combination of the Gauss hypergeometric functions\footnote{Similar expressions have been
obtained in \cite{Kre92} by avoiding standard techniques, in particular the Feynman parametrization.} $_2F_1$. The same happens also to
\cite[Eq.(21)]{BD91}. These reductions have been noticed readily in \cite{BDS96}.

The elegant approach of functional equations by Tarasov also leads \cite[Eqs.(2.18)-(2.22)]{Tar08} to an analytic expression for
$J_D(p;m_1,m_2)$ in the form of a weighted sum of $J_D(s_{13};m_1,0)$ and $J_D(s_{23};0,m_2)$ in agreement with \cite{BD91,BDS96}
(see \eqref{IF0} for $J_D$ when one of its masses vanishes). The arguments $s_{13}$ and $s_{23}$ are complicated functions
of $m_1, m_2$ involving $\Delta$ from \eqref{Delta} along with its square root.

On the other hand, in \cite[Eq.(1.6)]{KSS82} and, quite recently, in \cite[Eq.(7)]{KT12} the results for the integral
\eqref{IP01} and  $J_D(p;\alpha,\beta;m_1,m_2)$, respectively, have been obtained in terms of a single Appell function $F_1$, similarly
as in \eqref{Iff1}. However, the arguments of that $F_1$ functions are directly related to roots $r_{\mp}$ of quadratic polynomials
which naturally arise in calculations employing the Feynman parametrization (see e.g \cite[Eqs.(7)-(9)]{KT12}). Thus, the external
integration variable $\bm y$, via masses $\kappa_1$ and $\kappa_2$ of \eqref{IP01}, enters these roots $r_{\mp}$ in a quite
complicated manner. There is in fact no chance to do the subsequent $\bm y$ integration by using Appell functions $F_1$ with
such arguments in the integrand as required by \eqref{Ipq}.

As an illustration, we quote the result for \eqref{IP01} from \cite[Eq.(1.6)]{KSS82}. In our notation
   \begin{equation} \label{KSS}
      J_D(p;\kappa_1,\kappa_2) = \gamma_D \, \kappa_1^{D-4}\,F_1\Big(1;2-\frac d2,2-\frac d2;2;\frac1r_-,\frac1r_+\Big)\,,
   \end{equation}
where $r_{\mp}=(p^2+\kappa_2^2-\kappa_1^2\mp\sqrt\Delta)/2p^2$, and again $\Delta=(p^2+\kappa_2^2-\kappa_1^2)^2+4\kappa_1^2p^2$.

By contrast, the arguments of the $F_1$ function in \eqref{Iff1} have simple and transparent dependencies on $\bm y$ as can be easily
seen from \eqref{AKA}. This has been achieved in \cite{Sh07} by taking special care of symmetry of the first integral in \eqref{IP01}
and avoiding such standard technical tools as Feynman or Schwinger parametrization. The structure of arguments of the Appell function
$F_1$ in \eqref{Iff1} allows its term-by-term integration by using its series definition \eqref{AF1} without producing any
spurious divergence. The absolute value $p$ of the external momentum also enters \eqref{Iff1} in a non-complicated
manner. This suggests an eventual possibility to use this result in higher-loop calculations of standard Feynman integrals.

Apparently, a generalization of the result \eqref{Iff1} to $J_D(p;\alpha,\beta;\kappa_1,\kappa_2)$ with higher integer powers of
denominators $\alpha$ and $\beta$ should be possible by differentiating \eqref{IP01} and \eqref{Iff1} with respect to $\kappa_1^2$ and
$\kappa_2^2$ and using the differentiation formulae for the Appell function $F_1$ derived quite recently in \cite{BS12}.

At $m=1$, a term-by-term integration of \eqref{Iff1} over $y$ has already been performed in a study of scaling functions
at the Lifshitz point in \cite[Sec.(5.3)]{RDS11}. The resulting series expansions for $I_{D,1}(p,q)$ (see \cite[Eqs.(5.69)-(5.73)]{RDS11})
involve the Clausenian hypergeometric functions $_3F_2$ with unit argument and negative integer parameter differences between its
numerator and denominator parameters studied very recently by one of the present authors and Srivastava \cite{SS15}.
The series expansion with the same structure can be obtained also in the case of $I_{1,m}(p,q)$ considered throughout the present
paper, whereas at $D=3$ the analytic expression is \cite{WIP}
   \[ I_{3,m}(p,q) = 8(16\pi)^{-\E}\Gamma(2-\E)(4p^2+q^4)^{-1+\E/2}\,
	                  {}_2F_1\lrB{1-\frac{\E}{2},\frac{\E}{2}; \frac32;\frac{4p^2}{4p^2+q^4}}\,, \]
where $\E=m/2+1\in[1,2[$.

Finally, by comparing the expressions \eqref{Iff1} and \eqref{KSS} for the same integral \eqref{IP01} we come to
a non-trivial relation between involved Appell functions $F_1$. This is the special case of the quadratic transformation
\cite[Eq.(4.2)]{Carl76} expressed in our notation as
   \[ F_1\Big(a;b,b;2a;\frac{1}{r_-},\frac{1}{r_+}\Big)\!\! = \!
         \Big(\frac{x+y}{2x}\Big)^{-2b}\!F_1\left(b;a,b-a+\frac 12;a+\frac12;
         \frac{-1}{(x+y)^2},\Big(\frac{x-y}{x+y}\Big)^2\right)\]
where $r_{\mp}=(1+y^2-x^2\pm\sqrt\Delta)/2\,$ and $\,\Delta=(1+y^2-x^2)^2+4x^2$, taken at $a=1$ and $b=2-D/2$.

Similar transformations are discussed in contemporary mathematical literature: see \cite{SR14} and its references. We suppose, they
should be useful in practical Feynman-integral calculations.

\section{The case $D=1$: splitting $J_1(p;\kappa_1,\kappa_2)$} \label{SAS}

The simplest way to proceed at $D=1$ is to split the result \eqref{Id1} of the integration over $\bm x$ via
   \[ J_1(p;\kappa_1,\kappa_2) = \frac12\Big(\frac{1}{\kappa_1}+\frac{1}{\kappa_2}\Big)\,
                                 \frac{1}{p^2+(\kappa_1+\kappa_2)^2}\,.\]
Hence we rewrite the original integral $I(p,q)$ as $I_{1,m}(p,q)=\tfrac12 \lr{I^-+I^+}$ where
   \begin{equation} \label{S3}
      I^\mp\equiv\int_{\bm y}^{(m)}\frac{1}{(\bm y\mp\bm q/2)^2}\,\frac{1}{p^2+4(y^2+q^2/4)^2}\,.
   \end{equation}
The integrals $I^\mp$ are independent of the signs at $\bm q$ and of its direction in $\mathbb R^m$.
Similarly as in \eqref{IP01} itself, shifting the variable $\bm y\mp\bm q /2\to\bm y$, we obtain
   \begin{equation} \label{S4}
      I_{1,m}(p,q) = \int_{\bm y}^{(m)}\frac{1}{y^2}\,\frac{1}{p^2+4a^2}\qquad\mbox{with}\qquad
      a:=(\bm y\pm\bm q/2)^2+q^2/4\,.
   \end{equation}
The apparent singularity of the integrand is now shifted to the origin. However, it is integrable because the dimension of the
integral over $\bm y$ is high enough: at $D=1$ we consider $m \in ]2,6[$.

The simplest possibility to proceed is to factorize the second denominator in \eqref{S4} via $p^2+4a^2 =
(2a-{\rm i}p)(2a+{\rm i}p)$ and to apply the partial fraction expansion
   \begin{equation} \label{S5}
      \frac{1}{p^2+4a^2}=\frac1{4p\,{\rm i}}\lrs{\frac{1}{a-{\rm i}p/2}-\frac1{a+{\rm i}p/2}}\,.
   \end{equation}
This splits the the integrand in \eqref{S4} into a difference of two terms and we have
   \begin{equation} \label{S7}
      I_{1,m}(p,q) = \frac1{4p\,{\rm i}}\lr{j_- - j_+} = \frac1{4p\,{\rm i}}\lr{j_--j_-^*}=\frac1{2p}\;\Im\,j_-
   \end{equation}
with
   \[ j_\mp\equiv \int_{\bm y}^{(m)}\frac{1}{y^2}\,\frac{1}{(\bm y\pm\bm q /2)^2+q^2/4\mp {\rm i}p/2}\,. \]
Now, $j_\mp$ can be identified with the second integral of \eqref{IP01} with $\bm x\mapsto \bm y$, $D\mapsto m$, $\kappa_1^2\mapsto 0$,
$\kappa_2^2\mapsto\kappa^2\mapsto q^2/4-ip/2$, and $\bm p\mapsto\pm\bm q/2$. Hence it is given by \eqref{IF0} with indicated changes
of its parameters:
   \[ j_\mp = J_m(q/2;0,\kappa) = -\frac{16}{(16\pi)^{\frac m2}}\,\Gamma\lrB{1-\frac m2}(q^2-{\rm i}2p)^{\frac m2-2}\,
                                  {}_2F_1\lrB{2{-}\frac{m}{2},1;\frac{m}{2};\frac{-q^2}{q^2\!-\!{\rm i}2p}}. \]
Inserting this result with $m=2+2\E$ into \eqref{S7} we finish the proof of the

\begin{proposition} For $D=1$ and real $m \in ]2;6[$, the integral \eqref{Ipq} is
   \begin{equation} \label{IFN}
      I_{1,m}(p,q) = -\frac1p\,\frac{\Gamma(-\epsilon)}{2^{1+4\epsilon}\,\pi^{1+\epsilon}}\;
			               \Im\,(q^2-{\rm i}2p)^{-1+\E}\,_2F_1\lrB{1,1-\E;1+\E;\frac{-q^2}{q^2\!-\!{\rm i}2p}}
   \end{equation}
where $\E=m/2-1\in ]0;2[$.
\end{proposition}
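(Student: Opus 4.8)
The plan is to carry the integral through the chain of elementary reductions assembled in Section~\ref{SAS}, the only non-routine points being two analytic-continuation arguments. First I would partial-fraction the one-dimensional inner result \eqref{Id1} in the masses, $J_1(p;\kappa_1,\kappa_2)=\tfrac12\bigl(\kappa_1^{-1}+\kappa_2^{-1}\bigr)\bigl(p^2+(\kappa_1+\kappa_2)^2\bigr)^{-1}$, so that $I_{1,m}(p,q)=\tfrac12(I^-+I^+)$ with $I^\mp$ as in \eqref{S3}; translation invariance of the $\bm y$-integral (the shift $\bm y\mp\bm q/2\mapsto\bm y$) collapses both $I^\pm$ to the single integral \eqref{S4}. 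Then I would factor $p^2+4a^2=(2a-{\rm i}p)(2a+{\rm i}p)$ and use the complex partial fraction \eqref{S5}, which splits the integrand and gives $I_{1,m}(p,q)=\tfrac1{4p\,{\rm i}}(j_--j_+)$; since for real $p,q$ the integrand of $j_+$ is the complex conjugate of that of $j_-$ (after the harmless sign flip $\bm q\mapsto-\bm q$, under which the $\bm y$-integral is unchanged), one has $j_+=j_-^*$, whence $I_{1,m}(p,q)=\tfrac1{2p}\,\Im\,j_-$ as in \eqref{S7}.

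The substantive step is to recognise $j_-$ as a two-point one-loop integral with one vanishing mass: in \eqref{IP01} put $\bm x\mapsto\bm y$, $D\mapsto m$, $\kappa_1\mapsto0$, $\kappa_2^2\mapsto\kappa^2=q^2/4-{\rm i}p/2$ and $\bm p\mapsto\pm\bm q/2$, so that \eqref{IF0} expresses $j_-$ as a single $_2F_1$ — equivalently, one sends $\kappa_1\to0$ in \eqref{Iff1}, whose $F_1$ then degenerates (its second argument tending to $1$) into ${}_2F_1\bigl(2-\tfrac m2,1;\tfrac m2;\cdot\bigr)$ times a Gamma-ratio equal, by Legendre duplication, to $\tfrac2{m-2}\gamma_m$. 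With $-(q^2/4)/\kappa^2=-q^2/(q^2-{\rm i}2p)$, $\kappa^{m-4}=4^{\,2-m/2}(q^2-{\rm i}2p)^{m/2-2}$, and the recursion $\tfrac2{m-2}\Gamma\bigl(2-\tfrac m2\bigr)=-\Gamma\bigl(1-\tfrac m2\bigr)$ (which at $m=2+2\E$ reads $\tfrac1\E(-\E)\Gamma(-\E)=-\Gamma(-\E)$), collecting constants gives $j_-=-16\,(16\pi)^{-m/2}\,\Gamma\bigl(1-\tfrac m2\bigr)\,(q^2-{\rm i}2p)^{m/2-2}\,{}_2F_1\bigl(2-\tfrac m2,1;\tfrac m2;-q^2/(q^2-{\rm i}2p)\bigr)$. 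Putting $m=2+2\E$, using the symmetry of $_2F_1$ in its numerator parameters to rewrite the hypergeometric factor as ${}_2F_1(1,1-\E;1+\E;\cdot)$, pulling the real number $\Gamma(-\E)$ out of $\Im$, and combining $-16(16\pi)^{-1-\E}=-2^{-4\E}\pi^{-1-\E}$ with the $\tfrac1{2p}$ of \eqref{S7} yields \eqref{IFN}.

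The two points needing care — and the only real obstacle — are both analytic continuations. Formula \eqref{IF0} (and \eqref{Iff1}) were derived for a real positive mass, whereas here $\kappa^2=q^2/4-{\rm i}p/2$ is complex; I would justify the substitution by noting that both sides are holomorphic in $\kappa^2$ on $\mathbb C\setminus(-\infty,0]$, which contains the positive reals and the point $q^2/4-{\rm i}p/2$, and would fix the branches of $\kappa^{m-4}$ and of $(q^2-{\rm i}2p)^{m/2-2}$ as the principal ones by continuity from $p=0$, where both are positive. Secondly, the splitting into $j_\mp$ is legitimate only where each converges separately, i.e. for $m\in\,]2,4[$ (the $\bm y\to0$ region needs $m>2$, the $\bm y\to\infty$ region needs $m<4$), so the manipulations directly establish \eqref{IFN} only on $]2,4[$; the identity then extends to all of $m\in\,]2,6[$, equivalently $\E\in\,]0,2[$, by analytic continuation in $\E$, since $\hat I_{1,m}(p,q)$ is analytic there and the right-hand side of \eqref{IFN} is analytic in $\E\in\,]0,2[$ as well — at $\E=1$ one has ${}_2F_1(1,0;2;\cdot)=1$, so $\Im\bigl[(q^2-{\rm i}2p)^{\E-1}{}_2F_1\bigr]$ vanishes to first order in $\E-1$ and cancels the simple pole of $\Gamma(-\E)$, leaving a finite limit.
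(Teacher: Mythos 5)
Your proposal is correct and follows essentially the same route as the paper: split $J_1(p;\kappa_1,\kappa_2)$ over its masses, shift $\bm y$, apply the complex partial fraction \eqref{S5} to get $I_{1,m}=\tfrac1{2p}\Im j_-$, and identify $j_-$ with the one-mass two-point integral \eqref{IF0} (equivalently the $\kappa_1\to0$ degeneration of \eqref{Iff1}) evaluated at the complex mass $\kappa^2=q^2/4-{\rm i}p/2$. Your added justifications — the continuation of \eqref{IF0} to complex $\kappa^2$ and the extension from $m\in\,]2,4[$ (where $j_\mp$ converge separately) to $m\in\,]2,6[$ by analyticity in $\E$ — are points the paper leaves implicit, and your constant bookkeeping correctly lands on the $\Gamma(1-\tfrac m2)=\Gamma(-\E)$ factor of \eqref{IFN}.
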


This is one of the main results of the present article. Expressed in terms of $_2F_1$, it provides an analytical continuation for
the function $I_{1,m}(p,q)$ outside of the convergence region of the integral \eqref{Ipq} at $D=1$.

At $q=0$, \eqref{IFN} reduces to
   \[ \hat I_{1,m}(p,0) = C_1 \frac{(2p)^{-2+\epsilon}}{1-\epsilon}\;\Im \;(-i)^{-1+\E}
                        = C_1 \frac{(2p)^{-2+\epsilon}}{1-\epsilon}\,\cos\frac{\pi\epsilon}{2}\,, \]
in agreement with \eqref{S00} and \eqref{I10}.

As $p \to 0$, the imaginary part of the complex function in \eqref{IFN} vanishes. To obtain the correct result for
$I_{1,m}(0,q)$ one has to expand it to first order in $p$. This cancels the overall factor $p^{-1}$ in \eqref{IFN}
yielding $I_{1,m}(0,q)=q^{-4+2\E}\,I_{1,m}(0,1)$ with $I_{1,m}(0,1)$ given in \eqref{I10}.

\subsection{Special cases}

Let us consider the implications of the formula \eqref{IFN} at integer values of $m \in [2;6]$. To simplify the exposition we consider
the reduced function
   \begin{equation} \label{IFR}
      \hat I_{1,m}(p,q)= C_1^{-1}I_{1,m}(p,q) = \Im \, \frac{(q^2-{\rm i}2p)^{\E-1}}{2(1-\epsilon)p}\, {}_2F_1\lrB{1,1-\E;1+\E;
			                   \frac{-q^2}{q^2-{\rm i}2p}}\,,
   \end{equation}
and introduce the short-hands $z := q^2-{\rm i}2p$ and $y := -q^2/z$.

\begin{itemize}
   \item{At $m=2$ we have $\epsilon = 0$, the equation \eqref{IFR} reduces to
         \[ \hat I_{1,2}(p,q) = \frac1{2p}\;\Im\,z^{-1}\,_2F_1\lr{1,1;1;y}=\frac1{2p}\;\Im\;\frac{z^{-1}}{1-y}
                        = \frac14\,\frac1{p^2+q^4}\,, \]
      and hence
         \[ I_{1,2+0}(p,q) = \frac1{4\pi\epsilon}\,\frac1{p^2+q^4}+ \mathscr O(1)\,.\] }
   \item{At $m=3$, $\epsilon = 1/2$ and we have
         \[ \hat I_{1,3}(p,q) = p^{-1}\;\Im\, z^{-1/2}\,_2F_1\lrB{1,\frac12;\frac32;y}
                        = -\frac1{4pq}\ln\left|\frac{\sqrt z+{\rm i}q}{\sqrt z-{\rm i} q}\right|^2\,. \]
      Some straightforward algebra leads to
         \begin{equation} \label{M3}
            \hat I_{1,3}(p,q) = -\frac1{4pq}\, \ln \frac{q^2\sqrt{4p^2+q^4}+p^2-pq\sqrt2\,\sqrt{\sqrt{4p^2+q^4}+q^2}}{p^2+q^4}\,.
         \end{equation}  }
   \item{At $m=4$ (or $\epsilon = 1$) the result quoted in \eqref{I41} has been obtained by different means and written down,
      without derivation, in \cite[Eq.(68)]{SPD05}. Now we obtain it from \eqref{IFR}. Here, as in the case of $p=0$,
      the imaginary part of the function on the right vanishes at $\epsilon=1$ and we need to calculate
         \[ \hat I_{1,4}(p,q) = \frac1{2p}\, \lim_{\alpha\to0}\alpha^{-1}\,\Im\, z^{-\alpha}\,_2F_1\lr{\alpha,1;2-\alpha;y}
				                        \qquad\mbox{with}\qquad \alpha:=1-\epsilon. \]
      Taking into account that $(\alpha)_n = \alpha \cdot (n-1)!+ \mathscr O(\alpha^2)$ and proceeding in the standard way we
			obtain\footnote{It is worth mentioning that the Mathematica package HypExp \cite{HM06} allows to expand arbitrary
			$_rF_{r-1}$ hypergeometric functions to any order in a small quantity around integer parameters.}
         \begin{equation} \label{HI4}
            \hspace{-1mm}\hat I_{1,4}(p,q) = \frac1{2p}\Im \lrS{\frac{1-y}y\ln(1-y)-\ln z}
			                               = \frac1{2p}\lrS{\frac{p}{q^2}\,\ln|1-y|^2-(2\varphi_1+\varphi)}
         \end{equation}
      where $\varphi_1 = {\rm Arg}(1-y) = \arctan\big[pq^2/(2p^2+q^4)\big]$ and $\varphi = {\rm Arg}\;z=-\arctan\big[2p/q^2\big]$
      are the principal arguments of $1-y$ and $z$. Applying repeatedly the formula (see e.g. \cite[Sec. I.3.5]{PBM1})
         \[ \arctan x-\arctan y=\arctan\frac{x-y}{1+xy}\,,\quad\quad xy>-1, \]
      to the sum of arguments $2\varphi_1+\varphi$ in \eqref{HI4} we obtain
         \begin{equation} \label{M4}
            \hat I_{1,4}(p,q) = \frac1{2p}\,\bigg[\frac{p}{q^2}\ln{p^2+q^4\over p^2+q^4/4}
                              + \arctan\frac{2p^3}{q^2(3p^2+q^4)}\bigg]\,.
         \end{equation}
      With $C_1(\epsilon=1)=1/(16\pi^2)$ and $p=1$, this matches the result \eqref{I41}.\\ }
   \item{At $m=5$, $\E=3/2$ and the equation \eqref{IFR} simplifies to
         \begin{align*}
	          \hat I_{1,5}(p,q) &= -p^{-1}\;\Im\, z^{1/2}\,_2F_1\lrB{1,-\frac12;\frac52;y} \\
	                            &= -p^{-1}\;\Im\, z^{1/2}\,\frac{3}{8y}\lrS{1+y-\frac{(1{-}y)^2}{2\sqrt y}\,
														    \ln\frac{1+\sqrt y}{1-\sqrt y}}\,.
	       \end{align*}
      This can be written, for example, as
         \begin{equation} \label{M5}
            \hat I_{1,5}(p,q) = -\frac3{4q} \, \Re\,\lrS{ \sqrt\zeta- \frac1u\, (1-u{\rm i})^2\,\ln\frac{\sqrt\zeta+{\rm i}}
                                {\sqrt\zeta-{\rm i}}}\,,
         \end{equation}
      which is the scaling form implied by the second expression in \eqref{SCI} with $p/q^2\mapsto u$ and $\zeta=1-2u{\rm i}$.
      Now, \eqref{M5} can be re-expressed in a form similar to \eqref{M3}, \eqref{M4}. \\}
   \item{Finally, at the marginal value $m=6$ (or $\E=2$) we obtain
         \[ \hat I_{1,6}(p,q) = -\frac{1}{2p}\;\Im\, z\,_2F_1\lrB{-1,1;3;y} = -\frac{1}{2p}\;\Im\, z\lrB{1-\frac{y}{3}} = 1\,. \]
      Hence
         \[ I_{1,6-0}(p,q) = \frac{\Gamma(2-\epsilon)}{512\pi^3}
                           = \frac1{512\pi^3\varepsilon}+ \mathscr O(1) \quad {\rm with} \quad \varepsilon=2-\E. \]
      Here, the $1/\varepsilon$ contribution matches the pole terms of both $I_{1,6-2\varepsilon}(1,0)$ and $I_{1,6-2\varepsilon}(0,1)$
      from \eqref{I10} when $\varepsilon \to 0$.}
 \end{itemize}

\begin{remark}
By splitting the denominator of the integrand in \eqref{S4} we also expressed the sum $p^2+q^4/4$ originally present in \eqref{S3} as
the product $(q^2/2-{\rm i}p)(q^2/2+{\rm i}p)$. The resulting factors landed in the arguments of functions in the final result of
this section, \eqref{IFN}. Most important here is the complex-argument Gauss hypergeometric function ${}_2F_1(z)$. In the
previous paragraphs we sketched its complex expansions in several simplifying cases where the parameter $m$ is integer. As expected,
the original combination $p^2+q^4/4$ is reconstructed in \eqref{M3}, \eqref{M4}, \eqref{M5}. The same expression is also present in  evaluations related to the special case $m=1$ with $D=2$ and $D=3$ quoted in the Introduction. \hfill $\Box$
\end{remark}

It is natural to ask here whether we can do a similar calculation in generic case given by \eqref{IFN}. That is, can we do the
complex expansion of a Gauss function $_2F_1$ by finding explicit expressions for its real and imaginary parts? We address this
general question it in the following section.

\section{Complex expansion of $_2F_1(a,b;c;z)$}\label{SB}

Our aim in this section is to derive explicit expressions for real and imaginary parts of the Gauss hypergeometric function with real parameters and complex argument. We shall do it in different ways by producing several series
representations and a Laplace-type integral representation. We hope that our results are of independent interest and
can be considered as a contribution to the theory of special functions.

Consider $_2F_1(a,b;c;z)$ with real parameters $a$, $b$, and $c$ and the complex argument $z$. We define its real and imaginary
parts via
   \[ X + {\rm i}Y:=\Re\, {}_2F_1(a,b;c;z) + {\rm i}\,\Im \,_2F_1(a,b;c;z) \]
where $z = x + {\rm i}\,y=|z|\,\e^{{\rm i}\varphi}\in \mathbb C$, finite $x, y\in \mathbb R\setminus\{0\}$,
$|z|=\sqrt{x^2+y^2}$, and $\varphi=\arctan(y/x)$. Let us start with the series representations for $X$ and $Y$.

\begin{proposition}\label{P2}
For all $a, b \in \mathbb R,\, c\in \mathbb R\setminus \mathbb Z_0^-$ and for all $|z|<1$, or for $c-a-b>0$ when $|z|=1$, we have
   \begin{align} \label{XXF}
      X &= \sum_{k \geq 0}\frac{(a)_k(b)_k}{(c)_k\, k!} \Big(\frac{|z|^2}x\Big)^k
           \,{}_2F_1\Big(\frac{k}2,\frac{k+1}2;\frac12;-\frac{y^2}{x^2}\Big), \\
      Y &= y\, \frac{|z|^2}{x^2}\,\frac{ab}c \sum_{k\ge0}\frac{(a+1)_k(b+1)_k}{(c+1)_k \,k!}\,\Big(\frac{|z|^2}x\Big)^k
           \,{}_2F_1\Big(\frac{k+2}2,\frac{k+3}2;\frac32;-\frac{y^2}{x^2}\Big). \label{YYF}
	 \end{align}
\end{proposition}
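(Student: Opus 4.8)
The plan is to expand ${}_2F_1(a,b;c;z)=\sum_{n\ge0}\frac{(a)_n(b)_n}{(c)_n\,n!}\,z^n$ according to \eqref{A4} and to write the real and imaginary parts of each power $z^n$ in closed form. Since $x\ne0$ we may put $t:=y/x$, so that $z=x(1+{\rm i}t)$, $\bar z=x(1-{\rm i}t)$, $|z|^2=x^2(1+t^2)$ and $1+t^2=(1+{\rm i}t)(1-{\rm i}t)$. The tool I would use is the pair of classical quadratic evaluations
\[
   {}_2F_1\lrB{\tfrac\nu2,\tfrac{\nu+1}2;\tfrac12;w^2}=\tfrac12\lrs{(1-w)^{-\nu}+(1+w)^{-\nu}},\qquad
   {}_2F_1\lrB{\tfrac{\nu+1}2,\tfrac{\nu+2}2;\tfrac32;w^2}=\frac{(1-w)^{-\nu}-(1+w)^{-\nu}}{2\nu\,w},
\]
both of which follow at once from \eqref{A4} together with the duplication identities $(\nu)_{2l}=4^l(\tfrac\nu2)_l(\tfrac{\nu+1}2)_l$, $(2l)!=4^l(\tfrac12)_l\,l!$ and $(2l+1)!=4^l(\tfrac32)_l\,l!$.

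First I would take $\nu=k$ and $w={\rm i}t$ in the first evaluation and multiply by $x^k(1+t^2)^k=(|z|^2/x)^k$; using $(1+t^2)^k(1\mp{\rm i}t)^{-k}=(1\pm{\rm i}t)^k$ together with $x^k(1+{\rm i}t)^k=z^k$ and $x^k(1-{\rm i}t)^k=\bar z^k$, this yields
\[
   \lrB{\frac{|z|^2}x}^k\,{}_2F_1\lrB{\frac k2,\frac{k+1}2;\frac12;-\frac{y^2}{x^2}}=\frac{z^k+\bar z^k}2=\Re z^k .
\]
Analogously, with $\nu=k+1$ and $w={\rm i}t$ in the second evaluation, multiplying by $x^{k+1}(1+t^2)^{k+1}$ and rewriting the coefficient via $(k+1)\,t\,x^{k+1}(1+t^2)^{k+1}=(k+1)\,y\,\tfrac{|z|^2}{x^2}\,(|z|^2/x)^k$, one gets
\[
   (k+1)\,y\,\frac{|z|^2}{x^2}\lrB{\frac{|z|^2}x}^k\,{}_2F_1\lrB{\frac{k+2}2,\frac{k+3}2;\frac32;-\frac{y^2}{x^2}}=\frac{z^{k+1}-\bar z^{k+1}}{2{\rm i}}=\Im z^{k+1}.
\]
In both identities the inner ${}_2F_1$ is to be read, whenever $y^2\ge x^2$, as its analytic continuation; after the multiplication by the rational factor each side becomes elementary, so the displayed equalities are in fact nothing but the binomial theorem and hold for all real $t$.

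It then remains to substitute these two identities into the series \eqref{A4} for ${}_2F_1(a,b;c;z)$. The first one gives \eqref{XXF} directly after splitting term by term. For \eqref{YYF} I would reindex by $n=k+1$ — the $n=0$ term having zero imaginary part — apply the second identity, and simplify by means of $(a)_{k+1}=a(a+1)_k$, $(b)_{k+1}=b(b+1)_k$, $(c)_{k+1}=c(c+1)_k$ and $(k+1)/(k+1)!=1/k!$; this is exactly what produces the prefactor $y\,\tfrac{|z|^2}{x^2}\,\tfrac{ab}c$ and the shifted Pochhammer parameters of \eqref{YYF}. The only delicate point — and the main obstacle — is to legitimise the interchange of $\Re$ and $\Im$ with the summation and the reindexing: both are justified by the \emph{absolute} convergence of $\sum_n\frac{(a)_n(b)_n}{(c)_n\,n!}\,z^n$, which holds for every $|z|<1$ and, since for real parameters the modulus of the $n$-th coefficient is of order $n^{a+b-c-1}$ as $n\to\infty$, also on the circle $|z|=1$ exactly when $c-a-b>0$ — precisely the hypotheses of the Proposition.
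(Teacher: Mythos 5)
Your proof is correct and follows essentially the same route as the paper: there one writes $z=|z|\,\e^{{\rm i}\varphi}$, reduces $X$ and $Y$ to the sums of $|z|^k\cos(k\varphi)=\Re\, z^k$ and $|z|^k\sin(k\varphi)=\Im\, z^k$, converts these by the tabulated quadratic evaluations \cite[7.3.3.1--2]{PBM3} --- precisely your two closed forms for ${}_2F_1(\tfrac\nu2,\tfrac{\nu+1}2;\tfrac12;w^2)$ and ${}_2F_1(\tfrac{\nu+1}2,\tfrac{\nu+2}2;\tfrac32;w^2)$ --- and then shifts the summation index by one to obtain \eqref{YYF}. The only differences are cosmetic: you derive the key evaluations directly from the binomial theorem instead of citing the tables, and you spell out the absolute-convergence and analytic-continuation points (the paper handles the case $|y|>|x|$ by passing to $w={\rm i}z$).
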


\begin{proof} Let $|y| \leq |x|$ (otherwise we consider $w = {\rm i}z$).
Using the polar form of $z$, from the series definition \eqref{A4}
for $_2F_1$ we read off
   \begin{equation} \label{XX}
      X\!=\!\sum_{k\ge 0}\frac{(a)_k(b)_k|z|^k}{(c)_k\, k!} \cos\!\Big(k\arctan\frac{y}{x}\Big), \,\,
		  Y\!=\!\sum_{k\ge 0}\frac{(a)_k(b)_k|z|^k}{(c)_k\, k!} \sin\!\Big(k\arctan\frac{y}{x}\Big).
   \end{equation}
Using the entry \cite[7.3.3.1]{PBM3} with $a\mapsto k/2$
transforms immediately \eqref{XX} to \eqref{XXF}.
By \cite[7.3.3.2]{PBM3} with $a\mapsto (k+1)/2$ we obtain
   \[ Y = \frac yx\,\sum_{k \geq 1}\frac{(a)_k(b)_k}{(c)_k\, (k-1)!} \Big(\frac{|z|^2}x\Big)^k
          \,{}_2F_1\Big(\frac{k+1}2,\frac{k}2+1; \frac32;-\frac{y^2}{x^2}\Big). \]
Shifting here the summation index by $1$ leads to the asserted formula \eqref{YYF}.
\end{proof}

In \eqref{XXF} and \eqref{YYF} we obtained $X$ and $Y$ in the form of series expansions in
powers of $|z|^2/x$, while the ratios $-y^2/x^2$ appear as arguments of $_2F_1$ functions in expansion coefficients.
In the next proposition we exchange the roles of $|z|^2/x$ and $-y^2/x^2$.
This time the Clausenian function ${}_3F_2(|z|^2/x)$ will give the expansion coefficients at powers of $-y^2/x^2$.

\begin{proposition} For all $a, b \in \mathbb R,\, c\in \mathbb R\setminus \mathbb Z_0^-$ and for all $|z|<1$, or for $c-a-b>0$ when
$|z|=1$, we have
   \begin{align} \label{W3}
      X &= 1+ \frac{|z|^2}x\,\frac{ab}c \sum_{k \geq 0}
             \,{}_3F_2\left( \begin{array}{c} a+1,\,b+1,\,2k+1\\
             c+1,\,2\end{array} ;\frac{|z|^2}x\right)
             \Big(-\frac{y^2}{x^2}\Big)^k\,,\\ \label{W4}
      Y &= y\, \frac{|z|^2}{x^2}\,\frac{ab}c \sum_{k \geq 0}
             \,{}_3F_2\left( \begin{array}{c} a+1,\,b+1,\,2k+2\\
             c+1,\,2\end{array} ;\frac{|z|^2}x\right)
             \Big(-\frac{y^2}{x^2}\Big)^k\, .
   \end{align}
\end{proposition}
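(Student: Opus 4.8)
The plan is to start from the same pair of real series \eqref{XX} for $X$ and $Y$ that was used in the proof of Proposition~\ref{P2}, and simply apply a different Prudnikov-type summation formula. Whereas Proposition~\ref{P2} used the entries \cite[7.3.3.1]{PBM3} and \cite[7.3.3.2]{PBM3} to rewrite $\cos(k\arctan(y/x))$ and $\sin(k\arctan(y/x))$ as single ${}_2F_1$'s in $-y^2/x^2$ (with $k$ appearing in the parameters), here I want to use the companion entries that expand the same trigonometric quantities \emph{the other way around}: as a power series in $-y^2/x^2$ whose $n$-th coefficient is a polynomial in $k$ (in fact a rising factorial in $k$, e.g.\ $(2k+1)_{2n}$ or similar). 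Substituting such an expansion into \eqref{XX} and interchanging the order of the (absolutely convergent, in the stated region) double summation over $k$ and $n$ will turn the inner $k$-sum into a generalized hypergeometric series in $|z|^2/x$ with numerator parameters $a+1,b+1$ together with the $k$-dependent factor $(2n+\delta)_{\text{(something)}}$, which is exactly the shape of the ${}_3F_2$ appearing in \eqref{W3}--\eqref{W4}.

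Concretely, the key steps in order are: (i) fix the branch normalization $|y|\le|x|$ (else pass to $w=\mathrm{i}z$), as before; (ii) write down the appropriate Chebyshev-type expansions $\cos(k\theta)=\sum_{n\ge0} c_n(k)\,(\text{function of }\tan\theta)$ and the sine analogue with $\theta=\arctan(y/x)$, picking the entry from \cite{PBM3} whose coefficients are rising factorials in $k$ so that the resulting $k$-series is hypergeometric; (iii) insert into \eqref{XX}, justify the swap of summations by absolute convergence for $|z|<1$ (or $c-a-b>0$ on $|z|=1$); (iv) recognize the inner sum: for $X$ one isolates the $k=0$ term, which contributes the leading $1$, and the remaining sum over $k\ge1$ is reindexed to produce the overall prefactor $\frac{|z|^2}{x}\frac{ab}{c}$ times ${}_3F_2(a+1,b+1,2k+1;c+1,2;|z|^2/x)$ — here the shift $a\mapsto a+1$, $b\mapsto b+1$, $c\mapsto c+1$ and the denominator $2=(1)_1\cdot 2/1$ come out exactly as in the derivation of \eqref{YYF}; similarly for $Y$ one gets the extra factor $y\,|z|^2/x^2$ and the third numerator parameter $2k+2$; (v) collect powers of $-y^2/x^2$ to read off \eqref{W3} and \eqref{W4}.

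The main obstacle I anticipate is purely bookkeeping rather than conceptual: matching the precise coefficient $c_n(k)$ from the chosen \cite{PBM3} entry so that, after the index swap, the $k$-series closes up as a ${}_3F_2$ with exactly the parameters $\{a+1,b+1,2k+1\}$ over $\{c+1,2\}$ and not, say, $\{a+1,b+1,k+1\}$ or with a stray $2^k$ or $n!$ floating around. In particular one must check that the argument of the ${}_3F_2$ is $|z|^2/x$ and not $|z|^2/x^2$ or $-|z|^2/x$, and that the unit-shift in $k$ needed to expose the $ab/c$ prefactor is compatible with the $(2k+1)$ versus $(2k+2)$ distinction between the $X$ and $Y$ cases. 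A secondary point to be careful about is the convergence region of the resulting ${}_3F_2$ at $|z|^2/x$: since $|y|\le|x|$ forces $|z|^2/x=|x|+y^2/|x|$ which can exceed $1$, one should note that these ${}_3F_2$'s are to be read, where necessary, as their analytic continuations — consistent with the convention already adopted in the excerpt for the functions \eqref{AF1}--\eqref{H4D} — and that the outer series in $-y^2/x^2$ is what actually controls convergence in the stated domain, mirroring the situation in Proposition~\ref{P2}.
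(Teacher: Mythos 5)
Your proposal is correct and amounts in substance to the paper's own proof: the companion expansion you seek, $|z|^k\cos\big(k\arctan\frac{y}{x}\big)=\big(\frac{|z|^2}{x}\big)^k\sum_{n\ge0}\frac{(k)_{2n}}{(2n)!}\big({-}\frac{y^2}{x^2}\big)^n$ (with the obvious sine analogue), is precisely the real (resp.\ imaginary) part of the binomial expansion $z^k=\frac{|z|^{2k}}{x^k}\big(1-\mathrm{i}\frac{y}{x}\big)^{-k}$ on which the paper bases its intermediate identity \eqref{EUW}, so the resulting double series, the interchange of summations, and the shift $k\mapsto k+1$ that produces the prefactor $\frac{ab}{c}\frac{|z|^2}{x}$, the parameters $a+1,b+1;c+1,2$, and the third numerator parameter $2n+1$ (resp.\ $2n+2$) via $(k+1)_{2n}=(2n)!\,(2n+1)_k/k!$ are identical in both arguments. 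The only real difference is that you separate real and imaginary parts before interchanging the double sum while the paper does so afterwards, and your flagged bookkeeping does close up exactly as claimed.
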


\begin{proof}
Under the same conditions as in Proposition \ref{P2}, let us write $z=|z|^2/z^*$ where the asterisk denotes complex conjugate.
Hence
   \[ z^n = \frac{|z|^{2n}}{x^n}\, \Big( 1-{\rm i}\frac yx\Big)^{-n}
					= \frac{|z|^{2n}}{x^n}\, {}_1F_0 \Big(n; -; {\rm i}\frac yx\Big)\,. \]
Then, by suitable series transformations including an exchange of the summation order,
   \begin{align}\nonumber
      {}_2F_1&(a,b;c;z) = 1 + \sum_{n \geq 1} \frac{(a)_n(b)_n}{(c)_n\, n!}\,\frac{|z|^{2n}}{x^n}\,
                          \sum_{k \geq 0} \frac{\Gamma(n+k)}{\Gamma(n)\,k!}\,
                          {\rm i}^k\, \Big( \frac yx\Big)^k
      \\\nonumber
                       &= {}_2F_1\left(|z|^2/x\right) + \frac{|z|^2}x\,\frac{ab}c \sum_{k \geq 1} {\rm i}^k\,\Big( \frac yx\Big)^k
                          \sum_{n \geq 0} \frac{(a+1)_n(b+1)_n(k+1)_n}{(c+1)_n(2)_n\,n!}\, \frac{|z|^{2n}}{x^n}
      \\\label{ER}
                       &= {}_2F_1\left(|z|^2/x\right) + \frac{|z|^2}x\,\frac{ab}c \,\sum_{k \geq 1} {\rm i}^k\,\Big(\frac yx\Big)^k\,
                          \,{}_3F_2\left( \begin{array}{c} a+1,\,b+1,\,k+1\\ c+1,\,2 \end{array} ;\frac{|z|^2}x\right)\,.
   \end{align}
Taking into account that the last $_3F_2$ function simplifies at $k=0$, we end up with
   \begin{equation} \label{EUW}
      {}_2F_1(a,b;c;z)=1+\frac{|z|^2}x\,\frac{ab}c\, \sum_{k \geq 0} {\rm i}^k\,\Big( \frac yx\Big)^k\,
			                      \,{}_3F_2\left( \begin{array}{c} a+1,\,b+1,\,k+1\\ c+1,\,2\end{array} ;\frac{|z|^2}x\right)\,.
   \end{equation}
Separating sums over the odd and even $k$ we come directly to \eqref{W3} and \eqref{W4}.
\end{proof}

The next proposition gives Laplace-type integral representations for $X$ and $Y$.

\begin{proposition}
Under the same conditions as in {\rm Proposition \ref{P2}}, we have
   \begin{align*}
      X &= 1 + |z|^2\,\frac{ab}{c} \int_0^\infty {\rm e}^{-xt}\,
           \cos(yt)\,{}_2F_2\left( \begin{array}{c} a+1,\,b+1\\c+1,\,2\end{array}
           ; |z|^2\,t\right)\, {\rm d}t\,,\\
      Y &= |z|^2\,\frac{ab}{c} \int_0^\infty {\rm e}^{-xt}\,
           \sin(yt)\,{}_2F_2\left( \begin{array}{c} a+1,\,b+1\\c+1,\,2\end{array}
           ; |z|^2\,t\right)\, {\rm d}t\, .
   \end{align*}
\end{proposition}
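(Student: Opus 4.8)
The plan is to start from the series representation \eqref{EUW}, which already expresses ${}_2F_1(a,b;c;z)$ as a single power series in ${\rm i}(y/x)$ with ${}_3F_2$ coefficients, and convert each ${}_3F_2\big(a{+}1,b{+}1,k{+}1;c{+}1,2;|z|^2/x\big)$ into a Laplace integral. The key observation is that the third numerator parameter $k+1$ and the factor $(|z|^2/x)^{k}$ in \eqref{EUW} together generate, after resummation, the kernel ${\rm e}^{-xt}$. Concretely, I would use the elementary identity $(k+1)_n\,r^{k} = \partial^{\,?}$-free form: $\sum_{k\ge0}(k+1)_n\,\zeta^{k}=n!\,(1-\zeta)^{-n-1}$, so that summing \eqref{EUW} against an auxiliary geometric series in a Laplace variable reproduces ${\rm e}^{-xt}\cos(yt)$ and ${\rm e}^{-xt}\sin(yt)$.

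More directly, I would invoke the standard Laplace (Borel-type) representation of a generalized hypergeometric function obtained by inserting $(2)_n = \int_0^\infty {\rm e}^{-s}s^{n+1}{\rm d}s/n!\cdot$(const) — more precisely the identity
\[
   {}_3F_2\!\left(\begin{array}{c} a+1,b+1,k+1\\ c+1,2\end{array};\zeta\right)
   = \frac{1}{\Gamma(k+1)}\int_0^\infty {\rm e}^{-t}\,t^{k}\,
     {}_2F_2\!\left(\begin{array}{c} a+1,b+1\\ c+1,2\end{array};\zeta t\right){\rm d}t ,
\]
which follows from $(k+1)_n = \Gamma(k+1+n)/\Gamma(k+1) = \frac{1}{\Gamma(k+1)}\int_0^\infty {\rm e}^{-t}t^{k+n}{\rm d}t$ applied term-by-term inside the ${}_3F_2$ series (with $\zeta = |z|^2/x$, and $x>0$ ensured by the reduction $|y|\le|x|$ plus, if needed, $x>0$; the case $x<0$ is handled as in the earlier proofs by rotating $z\mapsto {\rm i}z$). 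Substituting this into \eqref{EUW} and interchanging the $k$-sum with the $t$-integral, the $k$-sum becomes $\sum_{k\ge0}\frac{1}{k!}\big({\rm i}\,yt/x\big)^{k}(1/1)$ — wait, more carefully, $\sum_{k\ge0}\frac{({\rm i}y/x)^{k}}{\Gamma(k+1)}\,t^{k} = \exp({\rm i}\,yt/x)$. Then pulling the factor $|z|^2/x$ and rescaling $t\mapsto xt$ turns ${\rm e}^{-t}\exp({\rm i}yt/x)$ into ${\rm e}^{-xt}{\rm e}^{{\rm i}yt}$ and $\zeta t = (|z|^2/x)(t/x)\cdot x = |z|^2 t$, so that
\[
   {}_2F_1(a,b;c;z) = 1 + |z|^2\,\frac{ab}{c}\int_0^\infty {\rm e}^{-xt}\,{\rm e}^{{\rm i}yt}\,
     {}_2F_2\!\left(\begin{array}{c} a+1,b+1\\ c+1,2\end{array};|z|^2 t\right){\rm d}t .
\]
Taking real and imaginary parts (the ${}_2F_2$ kernel is real since $a,b,c,|z|^2,t$ are real) immediately yields the two claimed formulas for $X$ and $Y$.

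The steps in order: (i) recall \eqref{EUW} and note $x$ may be taken positive after the usual rotation $z\mapsto{\rm i}z$; (ii) expand each ${}_3F_2$ coefficient into its defining series and replace $(k+1)_n$ by the Gamma integral, producing the ${}_2F_2$-Laplace formula above for fixed $k$; (iii) substitute into \eqref{EUW}, justify Fubini to swap $\sum_k$ and $\int_0^\infty$, and sum the resulting exponential series in $k$; (iv) rescale $t$ and split into real/imaginary parts. The main obstacle is step (iii): justifying the interchange of the infinite $k$-sum with the $t$-integral and the earlier term-by-term insertion of the Gamma integral into the ${}_3F_2$ series. For $|z|<1$ (and on $|z|=1$ with $c-a-b>0$) one has enough absolute convergence — the ${}_2F_2$ is entire and of suitable growth, ${\rm e}^{-xt}$ with $x>0$ dominates, and $\sum_k |yt/x|^k/k!$ converges — so Fubini–Tonelli applies on $[0,\infty)$; one must simply record the growth estimate on ${}_2F_2(\,\cdot\,;|z|^2 t)$ (exponential in $t^{1/2}$ roughly, hence beaten by ${\rm e}^{-xt}$) to make the bound explicit. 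Everything else is the bookkeeping of Pochhammer shifts already rehearsed in the proof of \eqref{EUW}.
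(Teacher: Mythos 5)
Your proof is correct and follows essentially the same route as the paper: starting from the series \eqref{EUW}, replacing each ${}_3F_2$ coefficient by the Laplace-type representation ${}_3F_2 = \frac{1}{\Gamma(k+1)}\int_0^\infty {\rm e}^{-t}t^{k}\,{}_2F_2(\cdot\,;wt)\,{\rm d}t$ (which the paper simply cites from Andrews--Askey--Roy rather than re-deriving from the Gamma integral for $(k+1)_n$), summing the exponential series in $k$, rescaling $t\mapsto xt$, and separating real and imaginary parts. The only difference is that you spell out the Fubini justification and the derivation of the integral representation, which the paper leaves implicit.
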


\begin{proof}
Taking into account the integral representation \cite[p.115, Exersize 11]{AAR}
   \[ {}_3F_2\left( \begin{array}{c} a,\,b,\,c\\ f,\,g\end{array} ; w \right)
       = \frac1{\Gamma(c)}\int_0^\infty {\rm e}^{-t}t^{c-1}\,
         {}_2F_2\left( \begin{array}{c} a,\,b\\f,\, g\end{array} ;wt\right)\, {\rm d}t
\qquad (\R(c)>0)\]
with $c=k+1$, we deduce from the series \eqref{EUW} the both statements.
\end{proof}

\section{Gauss $_2F_1$ {\it {\bf versus}} Horn $H_4$}\label{SB1}

In this section we shall consider a special situation in which one of the numerator parameters of $_2F_1(a,b;c;z)$, say $b$, equals to $1$.
We shall show that in this case $X$ and $Y$ can be expressed in terms of the Horn function of two variables $H_4$
defined by the double series in \eqref{H4D}.

Before proceeding we recall that the Horn function $H_4$ can be expressed as a single power series involving
$_2F_1$ \cite[Eq. (3.9)]{Deshp74}:
   \begin{equation} \label{D39}
      H_4(\alpha,\beta;\gamma,\delta;\,s,t) = \sum_{n \geq 0}\frac{(\alpha)_n(\beta)_n}{(\delta)_n}\,
          \,_2F_1\Big(\frac{n+\alpha}2,\frac{n+\alpha+1}2;\gamma;4s\Big)\,\frac{t^n}{n!}\,.
   \end{equation}
Indeed, using the Legendre duplication formula $\Gamma(z)\,\Gamma(z+\tfrac12)=\sqrt{\pi}\,2^{1-2z}\,\Gamma(2z)$ $(\Re(z)>0)$ in the
right--hand side we reproduce the double--series definition \eqref{H4D}. The parameter space for \eqref{D39} is
$\alpha, \beta \in \mathbb C; \gamma, \delta \in \mathbb C \setminus \mathbb Z^0_-$, while the convergence domain, according to
Horn's theorem \cite[p. 56 {\em et seq.}]{SriKar}, is given by $\{ (s,t) \colon 2\sqrt{|s|} + |t|<1\}$. We note also an alternative
form of writing $H_4$ in terms of $_2F_1$,
   \begin{equation} \label{DX}
      H_4(\alpha,\beta;\gamma,\delta;\,s,t) = \sum_{n \geq 0}\frac{(\alpha)_{2n}}{(\gamma)_n}\,
          \,_2F_1\left(\alpha+2n,\beta;\delta;t\right)\frac{s^n}{n!}\,.
   \end{equation}
Let us also recall \cite[Eq. (3.2)]{Deshp74} that the Horn function $H_4$ is expressible in terms of the the Appell function
$F_2$ (see \eqref{AF2}) {\it via}
   \begin{equation}\label{HVF}
      H_4(\alpha, \beta; \gamma, \delta; s,t) = (1+2\sqrt s)^{-\alpha}F_2 \Big(\alpha,\beta,\gamma-\frac12;\delta,2\gamma-1;
          \frac{t}{1+2\sqrt s},\frac{4\sqrt s}{1+2\sqrt s}\Big).
   \end{equation}
Moreover, when the parameters of the Horn function $H_4$ are related by $\beta=\alpha-\gamma+1$, we find its expression in terms
of the Appell function $F_4$ (see \eqref{AF4}):
\footnote{A more special relation \cite[Eq.(3.4)]{Deshp74} is misprinted: The both parameters
$\gamma$ and $\sigma+b'$ of $H_4$ should be replaced there by $(\alpha+1)/2$.}
   \begin{align*}
      &H_4(\alpha, \beta; \alpha-\beta+1, \delta; s, t)\\
      &= \Big(\frac{1+\sqrt{1-4 s}}{2}\Big)^{-\alpha} F_4\Big(\alpha,\beta;\delta,\alpha-\beta+1;\frac{2t}{1+\sqrt{1-4 s}},
         \frac{1-\sqrt{1-4 s}}{1+\sqrt{1-4 s}}\Big).
   \end{align*}
This can be proved by employing the quadratic transformation \cite[7.3.1.98]{PBM3} to the Gauss function in (\ref{D39}).
{Note that the relation of the parameters $\gamma=\alpha-\beta+1$ in $F_4$ is typical for another quadratic transformation of
Gauss functions, \cite[7.3.1.54]{PBM3}.}

\subsection{Real and imaginary parts of $_2F_1(a,1;c;z)$}

Let us return to our discussion of the complex expansion of the  Gauss function $_2F_1(a,b;c;z)$.
Comparing its real and imaginary parts \eqref{XXF} and \eqref{YYF} with the series
{definition of the Horn function $H_4$} \eqref{D39} we conclude that
   \begin{align} \label{X1H}
      \Re\, {}_2F_1(a,1;c;z) &= 1+\frac ac\,\frac{|z|^2}x\, H_4\Big(1,a+1;\frac12,c+1;-\frac{y^2}{4x^2},\frac{|z|^2}x\Big), \\
                                \label{Y1H}
      \Im\, {}_2F_1(a,1;c;z) &= \frac ac\,\frac{y|z|^2}{x^2}\,H_4\Big(2,a+1;\frac32,c+1;-\frac{y^2}{4x^2},\frac{|z|^2}x\Big).
   \end{align}
This implies that the result \eqref{IFN} for the integral $I_{1,m}(p,q)$
is expressible as a linear combination of
Horn functions $H_4$ from \eqref{X1H} and \eqref{Y1H} with $a{=}1{-}\E$, $c{=}1{+}\E$, and
   \[ z = -\frac{q^2}{q^2-{\rm i}\,2p} = - \frac{q^4}{q^4+4p^2} - {\rm i}\,\frac{2pq^2}{q^4+4p^2} := x+{\rm i}\,y,
	        \qquad |z|^2 = \frac{q^4}{q^4+4p^2}\, . \]
Namely, we have to deal with
   \begin{align*}
	    \Re\, {}_2F_1\Big(1-\E,1;1+\E;\frac{-q^2}{q^2-{\rm i}\,2p}\Big)
			      &= 1-\frac{1-\E}{1+\E}\, H_4\Big(1, 2-\E, \frac12, 2+\E; -\frac{p^2}{q^4}, -1\Big) \\
      \Im\, {}_2F_1\Big(1-\E,1;1+\E;\frac{-q^2}{q^2-{\rm i}\,2p}\Big)
			      &= -2\,\frac{1-\E}{1+\E}\,\frac{p}{q^2}\, H_4\Big(2, 2-\E, \frac32, 2+\E; -\frac{p^2}{q^4}, -1\Big)\,.
  \end{align*}
{In the next section we shall remove the constraint $b=1$ in considering further relations
between the functions $_2F_1$ and $H_4$.

\subsection{$_2F_1$ with arbitrary parameters}

Let us reverse the problem discussed before and start now from the Horn functions $H_4$ with parameters $\gamma\in\{\frac12,\frac32\}$.
Such consideration leads us to the}

\begin{proposition}\label{P5}
Under the same assumptions on the parameters as above, $x\in \mathbb R_+$, and $y\in \mathbb R$,
the following relations hold:
   \begin{align}\label{PPM}
	    H_4\lrB{\alpha,\beta;\frac12,\delta;-x,y} &= \Re\, (1\pm{\rm i}2\sqrt x)^{-\alpha}\,
	          {}_2F_1\lrB{\alpha,\beta;\delta;\frac{y}{1\pm{\rm i}2\sqrt x}}, \\\nonumber
      H_4\lrB{\alpha,\beta;\frac32,\delta;-x,y} &=
      \frac{\mp 1}{2(\alpha{-}1)\sqrt x}\Im\,(1\pm{\rm i}2\sqrt x)^{1-\alpha}
	          {}_2F_1\lrB{\alpha-1,\beta;\delta; \frac{y}{1\pm{\rm i}2\sqrt x}}.
	 \end{align}
\end{proposition}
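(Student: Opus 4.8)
The plan is to feed the single-sum representation \eqref{D39} of the Horn function into two elementary quadratic evaluations of a Gauss function and then to recognise the resulting $n$-series as a $_2F_1$. First I would record, for $|w|<1$, the identities
\[
   {}_2F_1\Big(\frac a2,\frac{a+1}2;\frac12;w^2\Big)=\tfrac12\big[(1-w)^{-a}+(1+w)^{-a}\big],\qquad
   {}_2F_1\Big(\frac a2,\frac{a+1}2;\frac32;w^2\Big)=\frac{(1-w)^{1-a}-(1+w)^{1-a}}{2(a-1)\,w},
\]
both of which come from extracting the even, respectively odd, part of the binomial series $(1\mp w)^{-a}=\sum_k(a)_k(\pm w)^k/k!$ and using $(a)_{2m}=4^m(a/2)_m((a+1)/2)_m$ together with $(2m)!=4^m m!\,(1/2)_m$ and $(2m+1)!=4^m m!\,(3/2)_m$ (equivalently, Legendre's duplication formula, already invoked around \eqref{D39}).

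For the first relation in the proposition I would put $s=-x$ with $x\in\mathbb R_+$ in \eqref{D39}; the inner Gauss function then has argument $-4x=w^2$ with $w=\pm2{\rm i}\sqrt x$. Because $x>0$ keeps the base $1\pm2{\rm i}\sqrt x$ in the right half-plane, complex conjugation commutes with its principal powers, so the two summands $(1\mp2{\rm i}\sqrt x)^{-(n+\alpha)}$ and $(1\pm2{\rm i}\sqrt x)^{-(n+\alpha)}$ are conjugates and the first identity above yields ${}_2F_1\big(\tfrac{n+\alpha}2,\tfrac{n+\alpha+1}2;\tfrac12;-4x\big)=\Re\,(1\pm2{\rm i}\sqrt x)^{-(n+\alpha)}$. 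The coefficients $(\alpha)_n(\beta)_n y^n/[(\delta)_n n!]$ in \eqref{D39} being real, one may pull $\Re$ and the $n$-independent factor $(1\pm2{\rm i}\sqrt x)^{-\alpha}$ out of the sum over $n$, leaving precisely ${}_2F_1\big(\alpha,\beta;\delta;y/(1\pm2{\rm i}\sqrt x)\big)$; this is the first line of \eqref{PPM}.

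The second relation is obtained the same way with $\gamma=\tfrac32$, using the second quadratic identity with $a=n+\alpha$ and $w=\pm2{\rm i}\sqrt x$. It produces the inner factor
\[
   \frac{(1\mp2{\rm i}\sqrt x)^{1-n-\alpha}-(1\pm2{\rm i}\sqrt x)^{1-n-\alpha}}{\pm4{\rm i}\sqrt x\,(n+\alpha-1)}
   =\frac{\mp1}{2\sqrt x\,(n+\alpha-1)}\,\Im\,(1\pm2{\rm i}\sqrt x)^{1-n-\alpha}.
\]
The one genuinely non-cosmetic step, and the main obstacle, is to absorb the $n$-dependent denominator $n+\alpha-1$: here I would use the Pochhammer identity $(\alpha)_n/(\alpha+n-1)=(\alpha-1)_n/(\alpha-1)$, after which the $n$-series reads $\sum_n\big[(\alpha-1)_n(\beta)_n/((\delta)_n n!)\big]\,y^n\,\Im\,(1\pm2{\rm i}\sqrt x)^{1-n-\alpha}$; pulling $\Im$ and the factor $(1\pm2{\rm i}\sqrt x)^{1-\alpha}$ outside (again legitimate since the coefficients are real) turns the sum into ${}_2F_1\big(\alpha-1,\beta;\delta;y/(1\pm2{\rm i}\sqrt x)\big)$ and gives exactly the asserted formula — the prefactor $1/[2(\alpha-1)\sqrt x]$ and the shift $\alpha\mapsto\alpha-1$ in the $_2F_1$ being forced by this very manipulation, which also requires $\alpha\neq1$.

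Finally I would make the rearrangements rigorous. Inside Horn's domain $2\sqrt x+|y|<1$ one has $2\sqrt x<1$, so $|w|=2\sqrt x<1$ and the two quadratic identities apply term by term, while $\big|y/(1\pm2{\rm i}\sqrt x)\big|=|y|/\sqrt{1+4x}<|y|<1$, so the transformed $_2F_1$ series converge absolutely; this justifies every interchange of $\Re$/$\Im$ with the summation. Outside this domain both sides are defined by analytic continuation, exactly as for the earlier propositions, so the identities persist there as well.
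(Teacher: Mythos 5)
Your proposal is correct and follows essentially the same route as the paper: substitute the quadratic evaluations of $_2F_1(\mu,\mu+\tfrac12;\tfrac12\text{ or }\tfrac32;-4x)$ into the single-sum representation \eqref{D39}, read off real and imaginary parts of conjugate powers, apply $(\alpha)_n/(\alpha+n-1)=(\alpha-1)_n/(\alpha-1)$, and resum over $n$. The only differences are cosmetic — you derive the two quadratic identities from the even/odd parts of the binomial series instead of citing the table entries \cite[7.3.1.106--107]{PBM3}, and you add an explicit convergence justification.
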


\begin{proof}
Consider \eqref{D39} with $\gamma=1/2$ and $\gamma=3/2$, \emph{negative} $s\mapsto-x$, and $t\mapsto y$. Thus, the Gaussian function
on the right is given by $_2F_1(\mu, \mu+\frac12; \frac12;-4x)$ or $_2F_1(\mu, \mu+\frac12; \frac32;-4x)$
with $\mu\mapsto(\alpha+n)/2$. For these two cases the known formulae \cite[7.3.1.106 and 7.3.1.107]{PBM3} read
   \begin{align}\label{IXI}
      {}_2F_1\Big(\mu, \mu+\frac12; \frac12; -4x\Big) &= \frac12 \left[ (1-{\rm i}2\sqrt{x})^{-2\mu}
			          + (1+ {\rm i}2\sqrt{x})^{-2\mu} \right],\\ \label{IXY}
      {}_2F_1\Big(\mu, \mu+\frac12; \frac32; -4x\Big) &= \frac{\left[ (1- {\rm i}2\sqrt{x})^{1-2\mu}
			          - (1+ {\rm i}2\sqrt{x})^{1-2\mu} \right]}{{\rm i}4\sqrt{x}(2\mu-1)}\, .
	 \end{align}
We see that the square brackets in the last two equations contain the sum and the difference of certain complex conjugate values
thus giving for these values their real and imaginary parts. Keeping this in mind, we substitute the right-hand sides of \eqref{IXI}
and \eqref{IXY} into \eqref{D39}. Remembering that $\mu=(\alpha+n)/2$ we sum up the resulting series over $n$ and obtain the
asserted formulae. While the summation leading to \eqref{PPM} is straightforward, in deriving the following result we had to take
into account the relation $(\alpha)_n/(\alpha+n-1)=(\alpha-1)_n/(\alpha-1)$.
\end{proof}

The results of the Proposition 5.1 are more general than that of \eqref{X1H} and \eqref{Y1H}. {Here the previous constraint
$b=1$ is removed, and the involved Gauss hypergeometric functions $_2F_1$ are considered with generic real parameters.}

\begin{corollary}
The result \eqref{IFN} for $I_{1,m}(p,q)$ can be expressed in terms of a {\rm single} Horn function $H_4$:
   \[ I_{1,m}(p,q) = \frac{\Gamma(2-\epsilon)}{16^\epsilon\,\pi^{1+\epsilon}\,\epsilon}\;
	                   q^{-4+2\epsilon}\, H_4\lrB{2-\E,1;\frac32,1+\E;\frac{-p^2}{q^4},-1}. \]
The constant in front of the last formula is just $C_1(\E)$ defined in \eqref{I10}.
In view of the relation \eqref{HVF} {between $H_4$ and $F_2$}, this result, as well as \eqref{X1H} and \eqref{Y1H},
can be expressed in terms of of the Appell function $F_2$ (see \eqref{AF2}).
\end{corollary}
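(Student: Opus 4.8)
The plan is to combine the second formula of Proposition~\ref{P5} with the Proposition~1 expression~\eqref{IFN} for $I_{1,m}(p,q)$. First I would observe that the imaginary part appearing in \eqref{IFN} is of exactly the shape handled by the lower line of \eqref{PPM}: writing $z=q^2-{\rm i}2p=q^2(1-{\rm i}2p/q^2)$, we have $(q^2-{\rm i}2p)^{-1+\E}=q^{2(-1+\E)}(1-{\rm i}2\sqrt{x})^{1-\alpha}$ provided we set $\alpha-1=1-\E$, i.e. $\alpha=2-\E$, and $x=p^2/q^4$ so that $\sqrt{x}=p/q^2$ (recall $p,q>0$). The argument of the $_2F_1$ in \eqref{IFN} is $-q^2/(q^2-{\rm i}2p)=-1/(1-{\rm i}2p/q^2)=y/(1-{\rm i}2\sqrt{x})$ with $y=-1$, and its parameters $(1,1-\E;1+\E)$ match $(\alpha-1,\beta;\delta)$ once $\beta=1$, $\delta=1+\E$. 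Thus the quantity $\Im\,(q^2-{\rm i}2p)^{-1+\E}\,{}_2F_1(\cdots)$ equals $q^{2(-1+\E)}\,\Im\,(1-{\rm i}2\sqrt x)^{1-\alpha}\,{}_2F_1(\alpha-1,\beta;\delta;y/(1-{\rm i}2\sqrt x))$, which by the minus-sign branch ($\mp\to-$, i.e. the $1-{\rm i}2\sqrt x$ choice) of the second line of \eqref{PPM} is $-\,q^{2(-1+\E)}\cdot 2(\alpha-1)\sqrt x\, H_4(\alpha,\beta;\tfrac32,\delta;-x,y) = -2(1-\E)\,q^{2(-1+\E)}\,(p/q^2)\,H_4(2-\E,1;\tfrac32,1+\E;-p^2/q^4,-1)$.

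Next I would substitute this into \eqref{IFN}. The prefactor there is $-\tfrac1p\,\Gamma(-\epsilon)/(2^{1+4\epsilon}\pi^{1+\epsilon})$; multiplying by the expression just obtained kills the $1/p$ against the factor $p$, and produces
\[
  I_{1,m}(p,q)=\frac{\Gamma(-\E)}{2^{1+4\E}\pi^{1+\E}}\cdot 2(1-\E)\,q^{-2+2\E}\cdot q^{-2}\,H_4\!\lrB{2-\E,1;\tfrac32,1+\E;\tfrac{-p^2}{q^4},-1},
\]
so that the accumulated power of $q$ is $q^{-4+2\E}$, exactly the scaling factor in \eqref{SCI}. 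It then remains to check that the scalar $2(1-\E)\Gamma(-\E)/2^{1+4\E}$ simplifies to $\Gamma(2-\E)/(16^\E\epsilon)$; here I use $\Gamma(2-\E)=(1-\E)\Gamma(1-\E)=(1-\E)(-\E)\Gamma(-\E)$ together with $\epsilon=\E$ (recall the parametrization $\epsilon=m/2-1=\E$), $2^{4\E}=16^\E$, and the arithmetic $2/2^{1+4\E}=1/2^{4\E}$; the $(1-\E)\cdot(-\E)=-\,$(something) sign should conspire with the explicit minus in the lower line of \eqref{PPM} and the minus in front of \eqref{IFN} to give the stated positive constant. That this scalar equals $C_1(\E)=16^{-\E}\pi^{-1-\E}\Gamma(2-\E)/\E$ is precisely the final assertion of the first sentence of the Corollary, so the computation doubles as that verification.

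Finally, for the closing sentence of the Corollary I would invoke \eqref{HVF} directly: setting $\alpha=2-\E$, $\beta=1$, $\gamma=\tfrac32$, $\delta=1+\E$, $s=-p^2/q^4$, $t=-1$ in \eqref{HVF} rewrites $H_4(2-\E,1;\tfrac32,1+\E;-p^2/q^4,-1)$ as $(1+2\sqrt{-p^2/q^4})^{-(2-\E)}F_2(2-\E,1,1;1+\E,2;\ldots)$, which expresses $I_{1,m}(p,q)$, and equally \eqref{X1H}--\eqref{Y1H}, through the Appell function $F_2$; no further work is needed beyond substitution. The only genuinely delicate point I anticipate is pinning down the correct branch in Proposition~\ref{P5} (the $\pm/\mp$ ambiguity) and tracking all the signs so that the final constant comes out as the manifestly positive $C_1(\E)$ rather than its negative; the argument $y=-1$ sits on the boundary of the convergence region $2\sqrt{|s|}+|t|<1$ of $H_4$, so strictly one should read the identity as an identity of analytic continuations (as the excerpt already stipulates for \eqref{AF1}--\eqref{H4D}), but that is a matter of interpretation rather than an obstacle to the derivation.
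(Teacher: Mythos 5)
Your derivation is exactly the paper's intended route: the Corollary is an immediate consequence of the second relation in Proposition \ref{P5} applied to \eqref{IFN} with $\alpha=2-\E$, $\beta=1$, $\delta=1+\E$, $x=p^2/q^4$, $y=-1$, followed by the identity $(1-\E)\Gamma(-\E)=-\Gamma(2-\E)/\E$ to recognize $C_1(\E)$, and \eqref{HVF} for the $F_2$ restatement. The one sign you left "to conspire" resolves as follows: choosing the $1-{\rm i}2\sqrt x$ branch means taking the \emph{lower} signs throughout, so the prefactor $\mp 1$ in \eqref{PPM} reads $+1$ and $\Im\,(1-{\rm i}2\sqrt x)^{1-\alpha}\,{}_2F_1(\cdots)=+\,2(1-\E)(p/q^2)\,H_4(\cdots)$ (not $-$); the overall minus in \eqref{IFN} then combines with the minus from $(1-\E)\Gamma(-\E)=-\Gamma(2-\E)/\E$ to give the stated positive constant $C_1(\E)$.
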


Though very compact and elegant, we must admit that such representations for the function $I_{1,m}(p,q)$
{in terms of the Horn function $H_4$ or the Appell function $F_2$} are mainly of academic interest from
the mathematical point of view. Their practical implications seem to be at the moment rather obscure.

{In concluding, we show the following relation of the formula \cite[6.8.1.17]{PBM3} to our calculations. We take this
entry at $p=q=1$ with $t\mapsto{\rm i}t$ and rewrite it as
   \begin{equation}\label{EE}
      (1-{\rm i}t)^{-a}\,_2F_1\Big(a,b;c;\frac{s}{1-{\rm i}t}\Big) =
         \sum_{k \geq 0}\frac{(\alpha)_k}{k!}\,({\rm i}t)^k\,_2F_1\left(a+k,b;c;s\right)
   \end{equation}
where $|t|<1,\;|s|<1$. The series on the right-hand side is similar by its structure to that of \eqref{ER}. Again, its real and
imaginary parts are given by partial sums over even and odd summation indices $k$, respectively.

Let us express the argument of the Gauss function on the right of \eqref{EE} as
   \[ z=x+{\rm i}y \qquad \mbox{with} \quad x:=\frac{s}{1+t^2}\;\, \mbox{and} \quad y:=\frac{ts}{1+t^2}. \]
This implies that $t=y/x$ and $s=|z|^2/x$ with $|z|^2=x^2+y^2$ as usual. Thus, for the real part of \eqref{EE} we obtain
   \[ \Re\, z^a{}_2F_1\left(a,b;c;z\right) = \Big(\frac{|z|^2}{x}\Big)^a \sum_{k \geq 0} \frac{(\alpha)_{2k}}{(\frac12)_k k!}\,
                      \Big(\frac{-y^2}{4x^2}\Big)^k\,_2F_1\Big(a+2k,b;c;\frac{|z|^2}{x}\Big) \]
where we took into account that $(2k)!=4^k(\frac12)_k k!$. The last series matches the right--hand side of the definition \eqref{DX}
of the Horn function $H_4$ in terms of $_2F_1$. Hence we have
   \[ \Re\, z^a{}_2F_1\left(a,b;c;z\right) = \Big(\frac{|z|^2}{x}\Big)^a H_4\Big(a,b;\frac12,c;\frac{-y^2}{4x^2},
	                    \frac{|z|^2}{x}\Big) \]
which is equivalent to (\ref{PPM}). An analogous expression can be obtained for the imaginary part of
$z^a{}_2F_1\left(a,b;c;z\right)$.}

\section*{Acknowledgements}
The authors are grateful to the anonymous referee for valuable comments and suggestions, which led to the substantial improvement
of the article, {and to Professor Yu. A. Brychkov for drawing our attention to the entry \cite[6.8.1.17]{PBM3} in
connection with our work}.

\end{document}